\documentclass[11pt]{amsart}
\usepackage{amssymb}
\usepackage{color}
\usepackage{enumerate}
\usepackage[pdftex]{graphicx}
\newtheorem{thm}{Theorem}[section]

\newtheorem{lemma}[thm]{Lemma}

\newtheorem{notation}[thm]{Notation}

\numberwithin{equation}{section}

\newtheorem{quest}[thm]{Question}

\def\gcd{{\rm gcd}}

\def\cent#1#2{{\bf C}_{#1}(#2)}

\def\zent#1{{\bf Z}(#1)}

%
% Relevant symbols
%

\def\Sm0#1{{{\rm GL}}(1,#1)}

\begin{document}

\title[Complete bipartite divisor graphs of finite groups]
{Groups having complete bipartite divisor graphs for their conjugacy class sizes}

\author[R.~Hafezieh]{Roghayeh Hafezieh}
\address{R.~Hafezieh, Department of Mathematics \newline Gebze Institute of Technology
\\ Gebze, Turkey}
\email{roghayeh@gyte.edu.tr}

\author[P.~Spiga]{Pablo Spiga}
\address{P.~Spiga, Dipartimento di Matematica e Applicazioni,
\newline University of Milano-Bicocca
\\Via Cozzi 53 Milano, MI 20125, Italy}
\email{pablo.spiga@unimib.it}

\begin{abstract}
Given a finite group $G$, the \textit{bipartite divisor graph} for its conjugacy class sizes is the bipartite graph with bipartition consisting of the set of conjugacy class sizes of $G\setminus\zent G$ (where $\zent G$ denotes the centre of $G$) and the set of prime numbers that divide these conjugacy class sizes, and with $\{p,n\}$  being an edge if $\gcd(p,n)\neq 1$.

 In this paper we construct infinitely many groups whose bipartite divisor graph for their conjugacy class sizes is the complete bipartite graph $K_{2,5}$, giving a solution to a question of Taeri~\cite{8T}.
\end{abstract}

\subjclass[2010]{Primary 00Z99; secondary 99A00}
\keywords{bipartite divisor graph, conjugacy class size, extra-special group}

\maketitle
\section{Introduction}
\label{sec:introd}
Given a finite group $G$, there are several sets of invariants that convey nontrivial information about the structure of $G$. Some classical examples include the set consisting of the orders of the elements of \(G\),  or the set of conjugacy class sizes of \(G\), or the set of degrees of the irreducible complex characters of \(G\).

For every set $X$ as above, it is  natural to ask to what extent the group structure of \(G\) is reflected and influenced by $X$, and a useful tool in this kind of investigation is the so-called \emph{bipartite divisor graph} $B(X)$ of $X$. The bipartition of the vertex set of $B(X)$ consists of $X\setminus\{1\}$ and of the set of prime numbers dividing $x$, for some $x\in X$, and the edge set of $B(X)$ consists of the pairs $\{p,x\}$ with $\gcd(p,x)\neq 1$.

 In this paper we are interested in the case that $X$ consists of the conjugacy class sizes of $G$, that is, $X=\{|g^G|\mid g\in G\}$, and hence we study the bipartite divisor graph for the conjugacy class sizes. We will denote this graph simply by $B(G)$. We refer the reader to the beautiful survey~\cite{CC} for
the influence of the conjugacy class sizes on the structure of
finite groups.

Historically, various graphs associated with the algebraic structure of a  finite group have been extensively studied by a large number of authors, see for example~\cite{GA,AIPS,BHM,BDIP,CD,DPSS2,K,Wil}. Recently, Lewis~\cite{L} discussed many remarkable connections among these graphs by analysing analogous of these graphs for arbitrary positive integer
subsets. Then, inspired by the survey of Lewis, Praeger and Iranmanesh~\cite{IP} introduced the bipartite divisor graph $B(X)$ for a finite set
$X$ of positive integers and studied some basic  invariants of this graph
(such as the diameter, girth, number of connected components and
clique number).

One of the main questions that naturally arises in this area is classifying the groups whose bipartite divisor graphs have special graphical shapes. For instance, in~\cite{2HIr}, the first author of this paper and Iranmanesh have classified the groups  whose bipartite  divisor graphs are paths. Similarly, Taeri~\cite{8T} considered the case that the bipartite  divisor graph  is a cycle, and in the course of his investigation posed the following question:

\smallskip

\noindent\textbf{Question. }({{\cite[Question~$1$]{8T}}}) \textit{Is there any finite group $G$ such that $B(G)$ is isomorphic to a complete bipartite graph $K_{m,n}$, for some positive integers $m,n\geq 2$?}

\smallskip

The main theorem of this paper gives a family of  examples which shows that the answer to this question  is positive.

\begin{thm}\label{thm:2.2}
There exist infinitely many groups $G$ with $B(G)=K_{2,5}$.
\end{thm}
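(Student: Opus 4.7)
The plan is to exhibit an explicit infinite family of groups and compute their conjugacy class sizes directly. Since $B(G)=K_{2,5}$ has parts of sizes $2$ and $5$, and since edges record common prime factors, one part must consist of two primes $\{p,q\}$ and the other of five distinct class sizes, each divisible by both $p$ and $q$; equivalently, it suffices to produce a group $G$ whose non-central conjugacy classes realise exactly five distinct sizes, each a multiple of both $p$ and $q$.

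The construction should use extra-special $p$-groups, as flagged by the paper's keywords. I would fix a prime $p$ (eventually ranging over an infinite set, to produce infinitely many non-isomorphic examples), take $E$ an extra-special $p$-group with $\zent{E}=\langle z\rangle$ of order $p$, and form a semidirect product $G=E\rtimes H$ for a suitable group $H$ whose order involves a second prime $q$ and which centralises $\zent{E}$. The action of $H$ on the symplectic space $E/\zent{E}$ must be engineered so that the resulting orbit structure produces exactly the right class-size pattern, and in particular so that $\zent{G}=\zent{E}$.

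The computation then partitions $G\setminus\zent{G}$ by element type, using $|g^G|=[G:\cent{G}{g}]$: elements of $E\setminus\zent{E}$, whose centralisers in $G$ are controlled by the $H$-orbit of their image in $E/\zent{E}$ together with the centraliser inside $E$; and elements outside $E$, whose centralisers are controlled by the fixed-point subspace of the corresponding element of $H$ acting on $E$. In each case one computes the centraliser order, divides $|G|$ by it, and records the distinct values appearing.

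The delicate step, and the main obstacle, is to arrange the construction so that exactly five distinct class sizes arise, no more and no fewer, and so that each is divisible by both $p$ and $q$; slight perturbations of the action on $E/\zent{E}$ typically either introduce prime-power class sizes (which would prevent $B(G)$ from being complete bipartite) or change the total count of distinct sizes. Once these two properties are verified for a family depending on an unbounded parameter (an infinite set of primes $p$, or an unbounded rank of $E$), one obtains infinitely many groups $G$ with $B(G)=K_{2,5}$, as required.
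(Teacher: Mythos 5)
Your proposal is a strategy outline rather than a proof: it identifies the shape of the problem (two primes, five class sizes all divisible by both) and the general mechanism (a semidirect product with an extra-special group, class sizes computed via centralisers), but it never exhibits a group. You yourself flag the crux --- ``the delicate step, and the main obstacle, is to arrange the construction so that exactly five distinct class sizes arise, no more and no fewer'' --- and then leave exactly that step undone. For an existence theorem of this kind, the explicit construction and the verification of the class-size list \emph{are} the proof; without them nothing has been established. There is also no argument for infinitude beyond the remark that some parameter should be unbounded.

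For comparison, the paper's construction is more elaborate than the template you sketch, and the differences matter. It takes $E$ to be an extra-special $2$-group of order $2^{2p-1}$ (not an extra-special $p$-group), acted on by a dihedral group $M=\langle a,b\rangle$ of order $2p$ via a carefully chosen action in which $a$ acts fixed-point-freely on $E\setminus\zent E$; crucially, it does not set $G=E\rtimes M$ but inserts an extra abelian factor, $N=\langle n_1\rangle\times\langle n_2\rangle\times E$ with $M$ acting dihedrally on $\langle n_1,n_2\rangle$ as well, so that $\zent G=\langle n_1,z\rangle$ properly contains $\zent E$ and the centraliser of every non-central element of $N$ picks up the factor $p$ needed to kill prime-power class sizes. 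The resulting sizes are $2p$, $4p$, $2^{2p-1}p$, $2^{p-1}p^2$, $2^pp^2$, and infinitude comes from letting the odd prime $p$ vary. Your simpler template $G=E\rtimes H$ with $\zent G=\zent E$ would have to avoid class sizes that are powers of a single prime for elements of $E\setminus\zent E$ (whose centralisers contain the large abelian subgroups of $E$), which is precisely the obstruction the paper's extra factor is designed to circumvent; so the gap in your proposal is not merely one of detail but of the essential idea.
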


In light of Theorem~\ref{thm:2.2}, we pose the following question.
\begin{quest}For which positive integers $m,n\geq 2$, are there infinitely many finite groups with $B(G)=K_{m,n}$?
\end{quest}

In this paper we have tried to produce infinitely many groups $G$ with $B(G)=K_{m,n}$, $m,n\geq 2$ and with $m,n$ as small as possible. We were unable to construct groups $G$ with $B(G)=K_{2,3}$ or $B(G)=K_{2,4}$. Observe that Dolfi and Jabara~\cite{DolfiJabara} have classified  the finite groups having only two non-trivial conjugacy class sizes, and from this remarkable work it follows~\cite[Corollary~C]{DolfiJabara} that there is no group $G$ with $B(G)=K_{2,2}$.

%From the above theorem it is clear that for a finite group $G$, the bipartite  divisor graph cannot be a cycle of length four. On the other hand we know that a cycle of length four is the complete bipartite graph $K_{2,2}$. This fact inspired the following question in \cite{8T}:

%For a non-empty subset $X$ of positive integers let $\pi(x)$ be the
%set of primes dividing $x\in X$ and let $X^*:=X\setminus\{1\}$.
%For positive integers $m$ and $n$, we denote the greatest common divisor of
%$m$ and $n$ by $(m, n)$, the diameter of a graph $G$ will be denoted by $\diam(G)$
%(where by the diameter we mean the maximum distance between vertices in the same connected component of the graph)
%We denote the girth of the graph $G$ (the length of the shortest cycle) by $\mathbf{\mg}(G)$, (which is $\infty$
%where there is no any cycle in the graph). $C_{n}$ is a cycle of length $n$ and $K_{m,n}$ is the notation for a
%complete bipartite graph with vertex set $V$ such that $V$ is a distinct union of two parts with $m$ and $n$ vertices, respectively. The other notations through the paper
%are standard.

\section{Groups with complete bipartite  divisor graphs for their conjugacy class sizes}

Given a finite group $G$, we denote by $\zent G$ the centre of $G$, and given $g\in G$, we denote by $g^G$ the conjugacy class of $g$ under $G$. Furthermore, $|g^G|$ denotes the cardinality of $g^G$.
%We recall that, for a finite group $G$, the \textit{bipartite divisor graph} of its conjugacy class sizes $B(G)$ is the bipartite graph with vertex set the disjoint union of the set of conjugacy class sizes $|g^G|$ with  $g\in G\setminus\zent G$ (where $\zent G$ denotes the centre of $G$) and the set of prime numbers that divide these conjugacy class sizes. The edge set of $B(G)$ consists of the pairs $\{p,n\}$ with $p$ dividing $n$.

%We start by the definition of the bipartite divisor graph of a non-empty set.
%\begin{defn}
%Let $X$ be a non-empty subset of positive integers. The bipartite divisor graph $B(X)$ is an undirected graph with vertex set
%$$V(B(X)) = \rho(X) \cup X^{*}$$ and edge set $$E(B(X)) = \{\{p,x\} : p\in\rho(X), x\in X^* \quad \text{and} \quad p\mid x\}$$
%\end{defn}
%Thus $B(X)$ is bipartite, and $\{\rho(X)|X^*\}$ forms
%a bi-partition of the vertex set, which is unique if $B(X)$ is
%connected.

%Let $G$ be a finite group. By the bipartite  divisor graph of $G$ we mean $B(cs(G))$, which we simply denote it by $B(G)$.

%\medskip

\begin{notation}\label{notation}{\rm
We let $p$ be an odd prime number and $E$ be the extra-special $2$-group of  plus-type
\begin{eqnarray*}
E=\langle x_1,\ldots,x_{p-1},y_1,\ldots,y_{p-1},z&\mid&x_i^2=y_i^2=z^2=[x_i,z]=[y_i,z]=1 \,\,\forall i,\\
&&[x_i,x_j]=[y_i,y_j]=[x_i,y_j]=1\, \forall i\neq j,\\
&& [x_i,y_i]=z\,\, \forall i\rangle.
\end{eqnarray*}
Observe that
$$[x_1^{\varepsilon_1}\cdots x_{p-1}^{\varepsilon_{p-1}},y_1^{\eta_1}\cdots y_{p-1}^{\eta_{p-1}}]=z^{\varepsilon_1\eta_1+\cdots+\varepsilon_{p-1}\eta_{p-1}},$$
and  that every element $g\in E$ can be  written  uniquely as
$$g=x_1^{\varepsilon_1}\cdots x_{p-1}^{\varepsilon_{p-1}}y_1^{\eta_1}\cdots y_{p-1}^{\eta_{p-1}}z^\nu$$
 with $\varepsilon_i,\eta_i\in \{0,1\}$ for each $i\in \{1,\ldots,p-1\}$ and $\nu\in\{0,1\}$.

Finally, we let $A$ and $B$ be the maps from $\{x_1,\ldots,x_{p-1},y_1,\ldots,y_{p-1},z\}$ to $E$ given (with exponential notation) by
\begin{align*}
&z^A=z,&&z^B=z,&\\
&x_i^A=x_{i+1},&& x_i^B=x_{p-i},&\textrm{ for each }i\in \{1,\ldots,p-2\},\\
&x_{p-1}^A=x_1\cdots x_{p-1},&&x_{p-1}^B=x_1,&\\
&y_i^A=y_1y_{i+1},&&y_i^B=y_{p-i},& \textrm{ for each }i\in \{1,\ldots,p-2\},\\
&y_{p-1}^A=y_1,&&y_{p-1}^B=y_1.&
\end{align*}
}
\end{notation}
\begin{lemma}\label{lem:2.1}
Let $E$, $A$ and $B$ be as in Notation $\ref{notation}$. Then $A$ and $B$ extend to two automorphisms $a$ and $b$ (respectively) of $E$. Moreover, $a^p=b^2=(ab)^2=1$ and $\langle a,b\rangle$ is a dihedral group of order $2p$.
\end{lemma}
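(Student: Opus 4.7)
The plan is to exploit the standard realisation of the extra-special $2$-group $E$ in terms of its orthogonal quotient. The quotient $V := E/\zent E$ is an $\mathbb{F}_2$-vector space of dimension $2(p-1)$; the commutator map descends to a nondegenerate alternating bilinear form $\langle \bar u, \bar v\rangle := [u,v]$ (valued in $\langle z\rangle\cong\mathbb{F}_2$), and squaring descends to a quadratic form $Q(\bar u) := u^2$ polarising to $\langle\cdot,\cdot\rangle$. The relations in Notation~\ref{notation} say precisely that the images of $\{x_1,\ldots,x_{p-1},y_1,\ldots,y_{p-1}\}$ form a hyperbolic basis of isotropic vectors in $(V, Q)$. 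Consequently, verifying that $A$ (respectively $B$) preserves the defining relations of $E$ reduces to verifying that the induced $\mathbb{F}_2$-linear map on $V$ preserves both $\langle\cdot,\cdot\rangle$ and $Q$, which is a short direct calculation from the formulas in Notation~\ref{notation}. Because the form is nondegenerate and $V$ is finite, the resulting endomorphisms $a, b$ of $E$ are automorphisms.

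\medskip

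Since $a$ and $b$ fix $z$ by construction, the identities $a^p = b^2 = (ab)^2 = 1$ can be verified on the two elementary abelian subgroups $X := \langle x_1,\ldots,x_{p-1}\rangle$ and $Y := \langle y_1,\ldots,y_{p-1}\rangle$, each of which is clearly invariant under both $a$ and $b$ from the formulas. The problem thereby reduces to identities in $\mathrm{GL}_{p-1}(\mathbb{F}_2)$. On $X$ the matrix of $a$ is the companion matrix $M$ of the self-reciprocal polynomial $(t^p-1)/(t-1) = t^{p-1} + t^{p-2} + \cdots + 1$, so $M^p = I$; on $Y$ the matrix of $a$ is $(M^T)^{-1}$ (forced by preservation of $\langle\cdot,\cdot\rangle$), which therefore also has order dividing $p$. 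On both $X$ and $Y$ the automorphism $b$ acts as the index-reversal $i\mapsto p-i$, an obvious involution. Since $a\neq 1$ (as $x_1^a = x_2$) and $b\neq 1$ (as $p\geq 3$), the orders of $a$ and $b$ are exactly $p$ and $2$ respectively; moreover $b\notin\langle a\rangle$ for order reasons, so $\langle a,b\rangle$ has order at least $2p$, and is dihedral of order exactly $2p$ once $(ab)^2 = 1$ is established.

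\medskip

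I expect the main technical obstacle to be precisely the dihedral relation $(ab)^2 = 1$, equivalently $bab = a^{-1}$. On $X$ this amounts to the matrix identity $PMP = M^{-1}$ in $\mathrm{GL}_{p-1}(\mathbb{F}_2)$, where $P$ is the index-reversal matrix; the identity reflects the palindromic nature of $(t^p-1)/(t-1)$, so that conjugation by $P$ sends $M$ to the companion matrix of its reciprocal (which is the same polynomial) and hence to its inverse. The analogous identity on $Y$ follows by transposition from the one on $X$. Once this is in hand, the lemma is complete.
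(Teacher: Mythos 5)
Your proof is correct, but it follows a genuinely different route from the one in the paper. The paper does everything by hand on the generators: it verifies a representative sample of the relations $[x_i^A,y_j^A]=[x_i,y_j]^A$, checks $b^2=1$ and $(ab)^2=1$ by writing out the images of every generator, and derives $a^p=1$ from the ad hoc identities $x_{p-1}^{a^2}=x_1$ and $y_i^{a^{p-1-i}}=y_{p-1-i}y_{p-1}$ (the latter by induction on $i$). You instead pass to the orthogonal space $V=E/\zent E$ over $\mathbb{F}_2$, note that preserving the defining relations is the same as inducing an isometry of $(V,Q)$ while fixing $z$, and reduce all the order computations to linear algebra on the invariant subspaces $X$ and $Y$: on $X$ the map $a$ is the companion matrix $M$ of the self-reciprocal polynomial $f(t)=(t^p-1)/(t-1)$, on $Y$ it is $(M^{T})^{-1}$ by duality of the hyperbolic pairing, and $b$ is the reversal involution $P$ on both. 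This buys conceptual clarity --- $a^p=1$ because $f(M)=0$ and $f(t)(t-1)=t^p-1$, and $bab=a^{-1}$ because $f$ is palindromic --- at the cost of invoking the standard dictionary between extra-special $2$-groups and quadratic forms, which the paper's elementary, self-contained computations avoid. The one step you should tighten is $PMP=M^{-1}$: as phrased, ``conjugation by $P$ sends $M$ to the companion matrix of its reciprocal, and hence to its inverse'' does not quite parse, because for a palindromic $f$ the companion matrix of the reciprocal is $M$ itself, not $M^{-1}$. The identity is nonetheless true, and the clean justification is that $P$ is the matrix, in the basis $1,t,\ldots,t^{p-2}$ of $\mathbb{F}_2[t]/(f)$, of the map $g(t)\mapsto t^{p-2}g(1/t)$, which is well defined modulo $f$ precisely because $f$ is self-reciprocal with $f(0)\neq 0$, and which visibly conjugates multiplication by $t$ into multiplication by $t^{-1}$. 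The $Y$-version then follows by transposition, as you say, and your identification of $\langle a,b\rangle$ with the dihedral group of order $2p$ is handled correctly.
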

\begin{proof}
To show that $A$ and $B$ extend to two automorphisms, say $a$ and $b$ respectively, of $E$ it suffices to prove that they preserve the defining relations of $E$. For instance, for every $i\in \{1,\ldots,p-2\}$, we have
$[x_i^A,y_i^A]=[x_{i+1},y_1y_{i+1}]=[x_{i+1},y_{i+1}]=z=z^A=[x_i,y_i]^A$. Similarly, $[x_{p-1}^A,y_{p-1}^A]=[x_1\cdots x_{p-1},y_1]=[x_1,y_1]=z=z^A=[x_{p-1},y_{p-1}]^A$. For $i,j\in \{1,\ldots,p-2\}$ with $i\neq j$, we have $[x_i^A,y_j^A]=[x_{i+1},y_1y_{j+1}]=1=[x_i,x_j]^A$. Moreover, $[x_i^A,y_{p-1}^A]=[x_{i+1},y_1]=1=[x_i,y_{p-1}]^A$ and $[x_{p-1}^A,y_j^A]=[x_1\cdots x_{p-1},y_1y_{j+1}]=[x_1,y_1][x_{j+1},y_{j+1}]=z\cdot z=1=[x_{p-1},y_j]^A$.  All the other computations are similar and are left to the conscientious reader.

It is readily seen that $b^2$ fixes each  generator $x_1,\ldots,x_{p-1}$, $y_1,\ldots,y_{p-1}$, $z$ of $E$, and hence $b^2=1$. Observe that $x_{p-1}^{a^2}=(x_1\cdots x_{p-2}x_{p-1})^a=x_1^a\cdots x_{p-2}^ax_{p-1}^a=x_2\cdots x_{p-1}(x_1\cdots x_{p-1})=x_1$. Thus $x_1^{a^p}=(x_1^{a^{p-2}})^{a^2}=x_{p-1}^{a^2}=x_1$, and hence $x_i^{a^p}=(x_1^{a^{i-1}})^{a^p}=(x_1^{a^p})^{a^{i-1}}=x_1^{a^{i-1}}=x_i$ for every $i\in\{1,\ldots,p-1\}$. Arguing inductively on $i$, we get $y_i^{a^{p-1-i}}=y_{p-1-i}y_{p-1}$ for each $i\in \{1,\ldots,p-1\}$. It follows that $y_i^{a^{p-i}}=(y_{p-i-1}y_{p-1})^a=y_1y_{p-i}y_1=y_{p-i}$. Given $i\in \{1,\ldots,p-1\}$, applying this equality first to the index  $i$ and then to the index $p-i$, we get $y_i^{a^p}=(y_i^{a^{p-i}})^{a^i}=y_{p-i}^{a^i}=y_{p-(p-i)}=y_i$. Therefore $a^p$ fixes $x_1,\ldots,x_{p-1},y_1,\ldots,y_{p-1},z$ and hence $a^p=1$. Finally, we have
\begin{align*}
&z^{ab}=z,&\\
&x_i^{ab}=x_{i+1}^b=x_{p-i-1},&\forall i\in \{1,\ldots,p-2\},\\
&x_{p-1}^{ab}=(x_1\cdots x_{p-1})^b=x_{p-1}\cdots x_1=x_1\ldots x_{p-1},&\\
&y_i^{ab}=(y_1y_{i+1})^b=y_{p-1}y_{p-i-1},&\forall i\in\{1,\ldots,p-2\},\\
&y_{p-1}^{ab}=y_1^{b}=y_{p-1},
\end{align*}
from which it easily follows that $(ab)^2=1$.
\end{proof}	

\begin{proof}[Proof of Theorem~$\ref{thm:2.2}$]
Let $p,E,A$ and $B$ be as in Notation~\ref{notation}, let $a$ and $b$ be the automorphisms extending $A$ and $B$ as in Lemma~\ref{lem:2.1}, and  set $M = \langle a,b\rangle$. Given two elements $n_1$ and $n_2$ of order $p$, define $N=\langle n_1\rangle\times\langle n_2\rangle\times E$ and $G=N\rtimes M$ where
$$n_1^a=n_1,\,\, n_2^a=n_1n_2,\,\, n_1^b=n_1,\,\, n_2^b=n_2^{-1}$$
(note that this is well-defined because the action of $M =\langle a,b\rangle$ on $\langle n_1,n_2\rangle$ determines a dihedral group of automorphisms of $\langle n_1,n_2\rangle$ order $2p$).  We show that $B(G)=K_{2,5}$.

We start by determining the conjugacy class sizes of $G\setminus\zent G$. Clearly, $\zent G=\langle n_1,z\rangle$.

We claim that the conjugacy classes of $G$ in $N\setminus \zent G$ have cardinality $2p$ or $4p$. Let $e\in \{h\in E\mid [a,h]\in \zent E\}$ and write $e=x_1^{\varepsilon_1}\cdots x_{p-1}^{\varepsilon_{p-1}}y_1^{\eta_1}\cdots y_{p-1}^{\eta_{p-1}}z^{\nu}$, for some $\varepsilon_1,\ldots,\varepsilon_{p-1},\eta_1,\ldots,\eta_{p-1},\nu\in \{0,1\}$. We have
\begin{eqnarray*}
e^a&=&(x_1^{\varepsilon_1})^a\cdots (x_{p-1}^{\varepsilon_{p-1}})^a (y_1^{\eta_1})^a\cdots (y_{p-1}^{\eta_{p-1}})^a(z^\nu)^a\\
&=&x_2^{\varepsilon_1}x_3^{\varepsilon_2}\cdots x_{p-1}^{\varepsilon_{p-2}}(x_1\cdots x_{p-1})^{\varepsilon_{p-1}}(y_1y_2)^{\eta_1}(y_1y_3)^{\eta_2}\cdots (y_1y_{p-1})^{\eta_{p-2}}y_{1}^{\eta_{p-1}}z^\nu\\
&=&x_1^{\varepsilon_{p-1}}x_2^{\varepsilon_1+\varepsilon_{p-1}}\cdots x_{p-1}^{\varepsilon_{p-2}+\varepsilon_{p-1}}y_1^{\eta_1+\cdots+\eta_{p-1}}y_2^{\eta_1}\cdots y_{p-1}^{\eta_{p-2}}z^\nu.
\end{eqnarray*}
As $e^{-1}e^a=[e,a]\in \zent E=\langle z\rangle$, by comparing $e$ with $e^a$, we get $\varepsilon_i=\eta_i=0$ for every $i\in \{1,\ldots,p-1\}$, and hence $e\in \zent E$. Thus $\{h\in E\mid [a,h]\in \zent E\}=\zent E$. In particular, $\cent E a=\zent E$ and $a$ acts fixed point freely on $E\setminus\zent E$. It follows that  the orbits of $M$ acting by conjugation on $E\setminus \zent E$ have size $p$ or $2p$. Since $|E^{\prime}|=2$, for every $e\in E\setminus\zent E$ we have $|E:\cent E e|=2$. Thus $e^{G}$ has size $2p$ or $4p$. Now let $g\in N\setminus \zent N$. Write $g = ne$, for some $n\in \langle n_{1},n_{2}\rangle\setminus\langle n_{1}\rangle$ and $e\in E\setminus\langle z\rangle$. Now $\cent N g=\langle n_{1},n_{2}\rangle \cent E e$ has index $2$ in $N$ and so (as $a$ does not centralise $n$) $\cent G g$
has index divisible by $2p$. It follows (with an elementary computation) that the conjugacy classes of $G$ in $N\setminus \zent G$ have size $2p$ or $4p$. Observe that both possibilities can occur: $|x_1^G|=4p$ and $|(x_1x_{p-1})^G|=2p$.

Let $g\in \langle N,a\rangle\setminus N$ and write $g=na^i$, for some $n\in N$ and $i\in \{1,\ldots,p-1\}$. Now, given $e\in E$, we have $e^n=ez^\nu$, for some $\nu\in \{0,1\}$. In particular, if $e\in \cent E g$, then $e=e^{g}=(e^n)^{a^i}=(ez^\nu)^{a^i}=e^{a^i}z^\nu$, and hence $[a,e]\in \zent E$. Thus $e\in \zent E$. It follows that  $\cent N g=\zent G$ and $|\cent G g|=p|\cent N g |=2p^2$. Thus $|g^G|=2^{2p-1}p$.

It remains to compute the size of the conjugacy classes of $G$ in $G\setminus \langle a,N\rangle$. Observe that every element of $G\setminus\langle a,N\rangle$ is conjugate to an element of $\langle b,N\rangle\setminus N$. In particular, since we are  interested only on the conjugacy class sizes, we may assume that $g\in \langle b,N\rangle\setminus N$. Write $g=nb$, for some $n\in N$. Observe that
\begin{eqnarray*}
\{h\in E\mid [h,b]\in \zent E\}&=&\langle x_1x_{p-1},x_2x_{p-2},\ldots,x_{(p-1)/2}x_{(p+1)/2},\\
&&y_1y_{p-1},y_2y_{p-2},\ldots,y_{(p-1)/2}y_{(p+1)/2},z\rangle=\cent E b,
\end{eqnarray*}
and $|\cent E b|=2^p$.
Now, arguing as above, it is easy to verify that $\cent N g\leq \langle n_1,\cent E b\rangle$ and $|\langle n_1,\cent E b\rangle:\cent N g|\in \{1,2\}$. Thus $|\cent G g|\in \{p^22^{p},p^22^{p+1}\}$ and hence $|g^G|$ equals $2^{p-1}p^2$ or $2^pp^2$. Observe that both of these cases can occur: in fact $|b^G|=|G:\cent G b|=2^{p-1}p^2$ and $|(x_1b)^G|=|G:\cent G {x_1b}|=2^pp^2$.

Summing up, the conjugacy class sizes $|g^G|$ (with $g\in G\setminus \zent G$) are $2p,4p,2^{2p-1}p,2^{p-1}p^2,2^pp^2$, from which it follows that $B(G)=K_{2,5}$.
\end{proof}

\end{document}